\newtheorem{theorem}{Theorem}[section]
\newenvironment{proof}{\par\noindent\textbf{Proof}.}{\hfill$\square$}
\newtheorem{proposition}[theorem]{Proposition}
\newtheorem{definition}[theorem]{definition}
\newcommand{\be}{\begin{equation}}
\newcommand{\ee}{\end{equation}}
\newcommand{\beq}{\begin{eqnarray}}
\newcommand{\eeq}{\end{eqnarray}}
\def\og{\leavevmode\raise.3ex\hbox{$\scriptscriptstyle\langle\!\langle$~}}
\def\fg{\leavevmode\raise.3ex\hbox{~$\!\scriptscriptstyle\,\rangle\!\rangle$}}
\begin{document}

\begin{frontmatter}

\selectlanguage{english}
\title{Affine Metrics and Associated Algebroid Structures: Application to General Relativity}

\selectlanguage{english}
\author{Gh. Fasihi Ramandi}
\ead{gh$_{-}$fasihi@aut.ac.ir}
\begin{abstract}
In this paper, affine metrics and its associated algebroid bundle provide an structure for unification of gravity and electromagnetism and, geometrization of matter. The field equations are derived from an action principal. The derived equations contain Einstein and  Maxwell equations in vacuum simultaneously. 
\vskip 0.5\baselineskip
\selectlanguage{francais}
\end{abstract}
\begin{keyword}
Affine Metrics\sep Lie Algebroids\sep Hilbert-Einstein action\sep  Unified Field Equation.
\end{keyword}
\end{frontmatter}


\selectlanguage{english}
\section{Introduction}
\label{intro}
Attempts at the unification of gravitation and electromagnetism have been made ever since the advent of general relativity. Most of these attempts share the idea that Einstein's original theory must in some way be generalized that some part of geometry describes electromagnetism.
\\
Early attempts in unification of fields have been done by Weyl (considering conformal structures and introducing a new gauge transformation, 1918), Kaluza (adding additional dimension to space-time, 1919), Eddington (considering connection as the central concept and decomposing its Ricci tensor to symmetric and anti-symmetric parts, 1921), Schouten (considering connections with nonzero torsion, 1921), Klein (interpreting fifth dimension of Kaluza theory as a relation to quantum concepts, 1926), Infeld (considering asymmetric metric that its symmetric part represent gravity and its anti-symmetric part represent electromagnetism
field, 1928), Einstein and Mayer (5-vector formalism and considering vector bundles and connections on vector bundles, 1931) \cite{history}.
\\
The due problem has been investigated through many others’ experiments so far. In \cite{affine 2} affine metrics and its associated algebroid bundle are developed and, theses structures are applied to the general relativity to provide an structure for unification of gravity and electromagnetism. But in \cite{affine 2}, authores have used an special affine metric and constructing the field equations is based on a good suggestion. Here, we consider more general case and obtain field equation from a principal action and calculus of variations. The derived equations not only contain Maxwell ane Einstein equations, but also describe the effects of so-called dark matter in field equations.
\section{Preliminaries}
In this section, we summarize definitions and essential facts about affine metric structures, with emphasis on those aspects that are most important for this paper. For more details, you can consult (\cite{affine 1},\cite{affine 2}).
\subsection{Affine Concepts}
\begin{definition}
Suppose that V and W are vector spaces, then a function
 $S:V\longrightarrow W$ is called an affine function if there
  exists a linear function $T:V\longrightarrow W$ such that for all $u,v\in V$ we have:
\begin{align*}
 S(u+v)=S(u)+T(v)
\end{align*}
\end{definition}
$T$ is unique and is known as the linear part of S. Every affine function $S:V\longrightarrow W$ is in the form of $S(u)=T(u)+a$ such that $T$ is linear and $a\in W$.\\
In this paper $V$ is a finite dimensional vector space.
\begin{definition}
If a function $S:V\times V\longrightarrow I\!\!R$ is an affine function for each variable separately, then it will be a 2-affine function. And if it is affine for the first variable and linear in the second variable, then it is called an affine-linear function. Similary, S is called linear-affine if it is linear for the first variable and affine for the second variable.
\end{definition}
\begin{definition}
If $S:V\times V\longrightarrow I\!\!R$ is a 2-affine function such that for all $a,b\in E, S(a,b)=S(b,a)$, then S is called symmetric.
\end{definition}
If $S:V\times V\longrightarrow I\!\!R$ is a symmetric 2-affine function, then there exists a unique linear-affine function $T_{1}$ and a unique symmetric bilinear function $T$ such that for all $a,b,u,v\in V$ we have:
\begin{align*}
S(a+u,b+v)=S(a,b)+T_{1}(u,b)+T_{1}(v,a)+T(u,v)
\end{align*}
$T_{1}$ is the linear-affine part of S and T is the bilinear part of S.
\begin{definition}
If a 2-affine function $S:V\times V\longrightarrow I\!\!R$ is symmetric and its bilinear part is an inner product on V, then $S$ is called an affine inner product on V.
\end{definition}
Each inner product on a vector space V as affine space, is also an affine inner product.\\
\textbf{Notation}
Let $a,b,u,v\in V$. An affine inner product on V is shown by $(a,b)$ and its affine-linear, linear-affine and biliniear parts respectively are shown by $(a,v\rangle$, $\langle u,b)$ and $\langle u,v\rangle$. Hence
\begin{align*}
(a+u,b+v)=(a,b)+(a,v\rangle +\langle u,b)+\langle u,v\rangle .
\end{align*}
Suppose $V$ is a vector space and $(.,.)$ is an affine inner product on $V$, then there exist a unique vector $z\in V$ and a unique scalar $\lambda$ such that
\begin{align*}
(u,v)=\lambda +\langle u-z,v-z\rangle
\end{align*}
For each inner product $\langle .,.\rangle$ on $V$ and vector $z\in V$ and scalar $\lambda$, the above equation defines an affine inner product on $V$.\\
In the first case it is sufficient to assume $z$ as the vector that for all $v\in V$, $\langle z,v\rangle=-(0,v\rangle$ and $\lambda =(0,0)-\langle z,z\rangle$. All affine inner products on $V$ are obtained in this way.
\subsection{associated inner product space to affine metrics}
Let $(.,.)$ be an affine inner product on $V$ . Set $\hat{V}$ be the space of real valued affine map on $V$ . $\hat{V}$ is a vector space whose dimension is one plus dimension of $V$ . For all $x\in V$ set $\hat{x}: V\to I\!\!R$ be the affine map $\hat{x}(y) = (x, y)$. The map $x\mapsto \hat{x}$ is affine and imbed $V$ into $\hat{V}$ as an affine subspace. For all $x\in V$ set $\bar{x} : V \to I\!\!R$ be the affine map $\bar{x}(y) =< x, y)$. The map $x\mapsto \bar{x}$ is linear and imbed $V$ into $\hat{V}$ as a vector subspace. Denote the set of all $\bar{x}$ by $\bar{V}$ . The space of real valued constant function on $V$ is a one dimensional subspace of $\hat{V}$ and is complementary to $\bar{V}$ . So, we find a natural projection $\rho: \hat{V}\to V$ whose kernel is constant functions and its restriction to $\bar{V}$ is $\bar{x}\mapsto x$.\\
If $(x, y) = \lambda+ \langle x-z, y-z \rangle$ and $\lambda\neq 0$, then there exist a unique inner product on $\hat{V}$ such that for all $x, y \in V$ we have $\langle \hat{x},\hat{y}\rangle = (x, y)$.\\
By manipulating this property we can find the right definition of this inner product. $\hat{z}$ is the constant function $\hat{z} (x) =\lambda$ and must be orthogonal to $\bar{V}$ . Every element of $\hat{V}$ is uniquely written in the form $\bar{x}+\mu \hat{z}$, and we must define
\begin{equation*}
\langle \bar{x}+\mu_1 \hat{z},\bar{y}+\mu_2 \hat{z} \rangle =\langle x, y \rangle +\lambda \mu_1 \mu_2.
\end{equation*}
Note that for all $x, y \in V$ we have $\hat{x}+\bar{y}=\widehat{x+y}$, so $\hat{x}= \overline{x-z} +\hat{z}$.
\section{Affine semi-Riemannian manifolds and geometry of its associated algebroid}
In this section, $M$ is a fixed smooth manifold and all functions are smooth.\\
\begin{definition}
If for every $p\in M$, we choose on every $T_p M$ an affine inner product smoothly, then we call it an affine metric on $M$ and $M$ is called an affine semi-Riemannian manifold.
\end{definition}
Every semi-Riemannian manifold is also an affine semi-Riemannian manifold. The bilinear part of an affine metric on M is a regular semi-Riemannian metric on $M$ and it is called the associated semi-Riemannian metric.\\
\textbf{Example} 
If $\langle .,.\rangle$ be a semi-Riemannian metric on $M$, and $A\in \mathcal{X}(M)$ and $\phi\in C^\infty (M)$, then the following formula defines an affine metric on M.
\begin{equation*}
\forall X, Y \in \mathcal{X}(M)\qquad (X, Y ) =\phi + \langle X- A, Y- A \rangle.
\end{equation*}
Every affine metric on $M$ can be written as above. If 0 be the zero vector field, it is sufficient to set $A$ be the vector field which for all $X\in \mathcal{X}(M)$, $\langle A,X \rangle = −(0,X \rangle$ in which $\langle ., .\rangle$ is the bilinear part of the affine metric and set $\phi = (0, 0)-\langle A,A\rangle$.
\subsection{Associated algebroid bundle and its geometry}
In this section $M$ is an affine semi-Riemannian manifold and $X, Y \in \mathcal{X}(M)$ and its affine metric is as follows:
\begin{equation*}
(X, Y ) = e^{2\theta} + \langle X- A, Y- A \rangle.
\end{equation*}
Let $\widehat{TM}$ be the vector bundle $\cup_{p\in M} \widehat{T_p M}$. For every $X\in \mathcal{X}(M)$ let $\hat{X}$ and $\overline{X}$ be sections of $\widehat{TM}$ such that $(\hat{X})_p = \hat{X}_p , ( \overline{X} )_p =\overline{X_p}$. $\hat{A}$ is the constant function $\hat{A}(u)=1$ and for simplicity we denote it by $G$. Let $\overline{TM}$ be the vector bundle 
$\cup_{p\in M} \overline{T_p M}$ that is a subvector bundle of $\widehat{TM}$. $\overline{TM}$ is complementary to line
subbundle generated by $G$. Every section of $\widehat{TM}$ uniquely written in the form $\overline{X}+ fG$ for some $X\in \mathcal{X}(M)$ and $f\in C^\infty (M)$. $\widehat{TM}$ is a semi-Riemannian vector bundle by the induced inner product:
\begin{equation*}
X, Y \in \mathcal{X}(M) , f, g \in C^\infty (M)\qquad \langle \overline{X}+ fG, \overline{Y}+ gG\rangle =\langle X, Y \rangle +fge^{2\theta}
\end{equation*}
$\widehat{TM}$ has a natural algebroid structure over $TM$. The anchor map is
\begin{equation*}
\begin{array}{c}
\rho:\widehat{TM}\longrightarrow TM \\ 
\overline{X}+fG\longmapsto X
\end{array} 
\end{equation*}
In the definition of Lie bracket on $\widehat{TM}$ the vector field $A$ make a crucial role. Lie bracket on $\widehat{TM}$ is defined as follows:
\begin{equation*}
[\overline{X},\overline{Y}]=\overline{[X,Y]}+2\Omega (X,Y)G,\qquad [\overline{X},G]=0
\end{equation*}
where,
\[
\Omega (X,Y)= \dfrac{1}{2}(\langle \nabla_X A , Y\rangle -\langle \nabla_Y A, X\rangle )
\]
Jacobi identity is hold because $\Omega=\dfrac{1}{2}dA^\flat$ is a closed form. Also, $\Omega$ satisfies in the following differential identity. For all $X,Y \text{and}\, Z\in \mathcal{X}(M)$,
\begin{equation}\label{bianchi}
(\nabla_X \Omega)(Y,Z)+(\nabla_Y \Omega)(Z,X)+(\nabla_Z \Omega)(X,Y)=0.
\end{equation}
Now, $\widehat{TM}$ is a semi-Riemannian algebroid over $TM$ and has a unique Levi-civita connection $\hat{\nabla}$ which can be computed by the following relation \cite{buc}. For all $X,Y \text{and}\, Z\in \mathcal{X}(M)$,
\begin{align*}
2\langle \widehat{\nabla}_{X}Y,Z\rangle &= \rho(X)\langle Y,Z\rangle +\rho(Y)\langle X,Z\rangle
 -\rho(Z)\langle X,Y\rangle\\ &+\langle [X,Y],Z\rangle -\langle [Y,Z],X\rangle +\langle [Z,X],Y\rangle
\end{align*}

Put $\Omega_\theta (X,Y)=e^\theta \Omega (X,Y)$, in foregoing computations we need to use some other tensors equivalent to $\Omega_\theta$. For $X,Y \in \mathcal{X}(M)$, define anti-symmetric 1-1 tensor field $F_\theta$ on $M$ as follows
\begin{equation*}
\langle F_\theta (X),Y\rangle =\Omega_\theta (X,Y).
\end{equation*}
With respect to the metric on $\widehat{TM}$, the secrion $G$ is not normal, $<G,G>=e^{2\theta}$. Set $G_1=e^{-\theta}G$ which is obviously normal field, $\langle G_1 ,G_1 \rangle=1$.
In terms of structures defined above, for all $X\in\mathcal{X}(M)$, we have
\[
[\overline{X},\overline{Y}]=\overline{[X,Y]}+2\Omega_\theta(X,Y)G_1 ,\qquad [\overline{X},G_1 ]=-\langle \vec{\nabla}\theta , X \rangle G_1 
\]

The gradient of a smooth function $f$ on a semi-Riemannain manifold $(M,g)$ is denoted by $\nabla f$ and defined by $g(\nabla f,X)=df(X)$.\\
\begin{proposition}
Levi-Civita connection of the semi-Riemannian algebroid $\widehat{TM}$ satisfies the following relations.
\begin{align}
\hat{\nabla}_{G_1 } G_1  &=-\overline{\nabla\theta}\label{a}\\
\hat{\nabla}_{\overline{X}} G_1  &=-\overline{F_\theta (X)}\\
\hat{\nabla}_{G_1 } \overline{X} &=-\overline{F_\theta(X)}+d\theta(X)G_1 \\
\hat{\nabla}_{\overline{X}} \overline{Y}&=\overline{\nabla_X Y}+\Omega_\theta (X,Y)G_1 
\end{align}
\end{proposition}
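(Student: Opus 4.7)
The plan is to verify each of the four identities by direct application of the Koszul-type formula
\[
2\langle \widehat{\nabla}_{X}Y,Z\rangle = \rho(X)\langle Y,Z\rangle +\rho(Y)\langle X,Z\rangle -\rho(Z)\langle X,Y\rangle +\langle [X,Y],Z\rangle -\langle [Y,Z],X\rangle +\langle [Z,X],Y\rangle,
\]
specialized to the sections $\overline{X},\overline{Y},G_1$. To do this cleanly, I would first record the structural data that the formula consumes: $\rho(\overline{X})=X$ and $\rho(G_1)=0$; the inner products $\langle \overline{X},\overline{Y}\rangle=\langle X,Y\rangle$, $\langle \overline{X},G_1\rangle=0$, and $\langle G_1,G_1\rangle=e^{-2\theta}\langle G,G\rangle=1$ (a constant, which will kill several terms); and the brackets $[\overline{X},\overline{Y}]=\overline{[X,Y]}+2\Omega_\theta(X,Y)G_1$ and $[\overline{X},G_1]=-d\theta(X)\,G_1$.

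Since $\widehat{TM}=\overline{TM}\oplus\langle G_1\rangle$, each identity is verified by testing both sides against $\overline{Z}$ and against $G_1$ for arbitrary $Z\in\mathcal{X}(M)$. For instance, for (\ref{a}) I substitute $X=Y=G_1$: the first three terms vanish ($\rho(G_1)=0$ and $\langle G_1,G_1\rangle$ is constant), $[G_1,G_1]=0$, and the remaining two bracket terms each contribute $-d\theta(Z)\langle G_1,G_1\rangle$, giving $2\langle\hat\nabla_{G_1}G_1,\overline Z\rangle=-2d\theta(Z)=-2\langle\nabla\theta,Z\rangle$; testing against $G_1$ yields $0$ by the same constancy, so $\hat\nabla_{G_1}G_1=-\overline{\nabla\theta}$. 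The other three formulas are obtained by the same pattern: for $\hat\nabla_{\overline X}G_1$ and $\hat\nabla_{G_1}\overline X$, the $\overline Z$-test reduces to $\langle F_\theta(X),Z\rangle=\Omega_\theta(X,Z)$ via the definition of $F_\theta$, while the $G_1$-test produces either $0$ or $d\theta(X)$ from the bracket $[\overline X,G_1]$. For the last identity, I substitute $\overline X,\overline Y,\overline Z$ and recognize the standard Koszul expression for the Levi-Civita connection of $\langle\cdot,\cdot\rangle$ on $M$, picking up the extra $2\Omega_\theta(X,Y)G_1$ in $[\overline X,\overline Y]$, whose pairing with the test section contributes the $\Omega_\theta(X,Y)G_1$ component.

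The main bookkeeping hazard, rather than a deep obstacle, is keeping track of signs and antisymmetry: $\Omega_\theta$ (hence $F_\theta$) is antisymmetric, and several Koszul terms carry minus signs, so careful sign-tracking through the three bracket contributions is required to see, e.g., that $\hat\nabla_{\overline X}G_1$ and $\hat\nabla_{G_1}\overline X$ agree in their horizontal parts but differ by the vertical correction $d\theta(X)G_1$ coming from $[\overline X,G_1]$. Once the four scalar checks are performed and the $\overline{TM}$ versus $G_1$ components are read off, the identities follow with no appeal to anything beyond the Koszul formula, the bracket rules, and the definitions of $F_\theta$ and $\nabla\theta$; the closedness identity (\ref{bianchi}) is not needed here and will only play a role later when curvatures are computed.
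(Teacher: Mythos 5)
Your proposal is correct and follows essentially the same route as the paper: the paper likewise invokes the Koszul formula for the algebroid Levi-Civita connection, tests against $G_1$ and the sections $\overline{Z}$ using $\rho(G_1)=0$, $\langle G_1,G_1\rangle=1$, and the bracket rules, and explicitly carries out only the case of (\ref{a}), leaving the other three as "straightforward computations." Your sign bookkeeping checks out (e.g.\ the two surviving bracket terms each giving $-d\theta(Z)$ in the first identity, and the sixth Koszul term producing $2\Omega_\theta(Z,X)=-2\langle F_\theta(X),Z\rangle$ in the second), so nothing further is needed.
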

\begin{proof}
Straightforward computations show these results. For example we verify (\ref{a}).
\begin{align*}
2\langle \hat{\nabla}_{G_1 } G_1  ,G_1  \rangle &=\rho(G_1 )\langle G_1 ,G_1 \rangle +\rho(G_1 )\langle G_1 ,G_1 \rangle -\rho(G_1 )\langle G_1 ,G_1 \rangle\\
&+\langle [G_1 ,G_1 ],G_1 \rangle -\langle [G_1 ,G_1 ],G_1 \rangle +\langle [G_1 ,G_1 ],G_1 \rangle =0
\end{align*}
Also, 
\begin{align*}
2\langle \hat{\nabla}_{G_1 } G_1  ,\overline{X} \rangle &=\rho(G_1 )\langle G_1 ,\overline{X}\rangle +\rho(G_1 )\langle \overline{X} ,G_1 \rangle -\rho(\overline{X})\langle G_1 ,G_1 \rangle\\
&+\langle [G_1 ,G_1 ],\overline{X}\rangle-\langle [G_1 ,\overline{X}],G_1 \rangle+\langle [\overline{X},G_1 ],G_1 \rangle\\
&=2\langle [\overline{X},G_1 ],G_1 \rangle=-2\langle \vec{\nabla}\theta ,X \rangle
\end{align*}
\end{proof}

For the sake of simplicity in the sequel, let us agree $S_\theta$ denotes the following symmetric tensor.
\[
S_\theta=Hes(\theta)+d\theta \otimes d\theta
 \]
Note that, for a smooth function $f$ on $M$ the Laplacian of $F$ is defined by $\triangle (f)=div(df)=tr(Hes(f))$. Let $\{E_i\}_{i=1}^{n}$ be an orthonormal local basis and $\hat{i}=<E_i,E_i>=\pm1$. For computing the trace of $S_\theta$,we have 
\begin{align*}
tr(S_\theta)&=\sum^{n}_{i=1}\hat{i}S_\theta(E_i,E_i)=\sum^{n}_{i=1}\hat{i}(Hes(\theta)(E_i,E_i)+d\theta(E_i)d\theta(E_i))\\
=&tr(Hes(\theta))+\sum^{n}_{i=1}\hat{i}d\theta(E_i)d\theta(E_i)=\triangle(\theta)+\vert {\nabla}\theta\vert^2.
\end{align*}
\begin{proposition}
If $X,Y,Z\in\mathcal{X}(M)$ and $R$ is the curvature tensor of $M$, then the curvature tensor of $\hat{\nabla}$, denoted by $\hat{R}$, satisfies the following relations.  
\begin{align}
\hat{R}(\overline{X},G_1 )(G_1 )&=-[\overline{\nabla_X \nabla\theta}+d\theta (X)\overline{\nabla\theta}+\overline{F_\theta (F_\theta (X))}] \label{c}\\
\hat{R}(\overline{X},G_1 )(\overline{Y})&=[S_\theta (X,Y)-\langle F_\theta (X),F_\theta (Y) \rangle]G_1  \nonumber \\
&+\Omega_\theta (X,Y)\overline{\nabla\theta}-\overline{(\nabla_X F_\theta)(Y)}-d\theta(X)\overline{F_\theta(Y)}-d\theta(Y)\overline{F_\theta(X)}\\
\hat{R}(\overline{X},\overline{Y})(G_1 )&=\overline{(\nabla_Y F_\theta)(X)}-\overline{(\nabla_X F_\theta)(Y)}+2 \Omega_\theta (X,Y)\overline{\nabla\theta}\\
\hat{R}(\overline{X},\overline{Y})(\overline{Z})&=\overline{R(X,Y)(Z)}+\langle Z,F_\theta(X)\rangle \overline{F_\theta(Y)}-\langle Z,F_\theta(Y)\rangle \overline{F_\theta(X)}\nonumber \\
&+2\langle F_\theta(X),Y\rangle \overline{F_\theta(Z)}-2d\theta (Z)\Omega_\theta (X,Y) G_1 \nonumber\\
&+\langle (\nabla_X F_\theta)(Y)-(\nabla_Y F_\theta)(X) ,Z\rangle G_1
\end{align}
\end{proposition}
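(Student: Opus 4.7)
The plan is to apply the defining identity
\[
\hat{R}(U,V)W = \hat{\nabla}_U \hat{\nabla}_V W - \hat{\nabla}_V \hat{\nabla}_U W - \hat{\nabla}_{[U,V]} W
\]
to each of the four argument triples, substituting the connection formulas (a)--(d) of the preceding proposition together with the bracket laws $[\overline{X},\overline{Y}] = \overline{[X,Y]} + 2\Omega_\theta(X,Y) G_1$ and $[\overline{X}, G_1] = -d\theta(X) G_1$. Throughout I would rely on the Leibniz rule for $\hat{\nabla}$ along with the fact that $\rho(G_1)=0$, so that any $G_1$-derivative of a scalar function on $M$ vanishes, and on the identity $\langle F_\theta(X),Y\rangle = \Omega_\theta(X,Y)$ with its skew-symmetry.

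For the first identity one expands the three constituent terms; the resulting $G_1$-coefficient reduces to $d\theta(F_\theta(X)) - \Omega_\theta(X, \nabla\theta)$, which vanishes by the defining relation for $F_\theta$. For the second identity the horizontal part assembles after a cancellation of the pair $\pm\overline{F_\theta(\nabla_X Y)}$, while the $G_1$-part collects as $\operatorname{Hes}(\theta)(X,Y) + d\theta(X)d\theta(Y) - \langle F_\theta(X), F_\theta(Y)\rangle$, which is precisely $S_\theta(X,Y) - \langle F_\theta(X), F_\theta(Y)\rangle$. For the third identity, torsion-freeness of $\nabla$ collapses the contributions involving $F_\theta([X,Y])$, and the vertical coefficients cancel thanks to $\Omega_\theta(Y, F_\theta(X)) = \langle F_\theta(Y), F_\theta(X)\rangle = \Omega_\theta(X, F_\theta(Y))$.

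The fourth identity is the most intricate because $\hat{\nabla}_{\overline{Y}}\overline{Z}$ already carries a $G_1$-component, so applying $\hat{\nabla}_{\overline{X}}$ generates mixed contributions via both (b) and (c); in parallel the bracket term $\hat{\nabla}_{[\overline{X},\overline{Y}]}\overline{Z}$ splits into a base piece and a $2\Omega_\theta(X,Y)\hat{\nabla}_{G_1}\overline{Z}$ piece via (c). The horizontal part then collects into $\overline{R(X,Y)Z}$ plus the three claimed $\overline{F_\theta}$ corrections, while the $G_1$-part reduces to
\[
X(\Omega_\theta(Y,Z)) - Y(\Omega_\theta(X,Z)) - \Omega_\theta([X,Y],Z) + \Omega_\theta(X, \nabla_Y Z) - \Omega_\theta(Y, \nabla_X Z) - 2 d\theta(Z) \Omega_\theta(X,Y),
\]
which by metric compatibility of $\nabla$ collapses to $\langle (\nabla_X F_\theta)(Y) - (\nabla_Y F_\theta)(X), Z\rangle - 2 d\theta(Z) \Omega_\theta(X,Y)$. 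I expect the main obstacle to be purely organizational bookkeeping of horizontal versus vertical components under the iterated covariant derivatives; no new ingredient beyond the structure equations already collected in the preliminaries is required.
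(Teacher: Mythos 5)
Your proposal is correct and follows exactly the route the paper takes: expand $\hat{R}(U,V)W=\hat{\nabla}_U\hat{\nabla}_V W-\hat{\nabla}_V\hat{\nabla}_U W-\hat{\nabla}_{[U,V]}W$ using the connection formulas of the preceding proposition and the bracket relations, with $\rho(G_1)=0$ killing the $G_1$-derivatives of scalars (the paper only writes out the first identity and declares the rest routine, whereas you sketch all four). The cancellations you identify — $d\theta(F_\theta(X))=\Omega_\theta(X,\nabla\theta)$ in the first case, the $\pm\overline{F_\theta(\nabla_X Y)}$ pair and the $Hes(\theta)+d\theta\otimes d\theta$ assembly in the second, torsion-freeness and metric compatibility in the last two — are precisely the ones needed.
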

\begin{proof}
Routine computations show these results. For example we compute (\ref{c}),
\begin{align*}
\hat{R}(\overline{X},G_1 )(G_1 )&=\hat{\nabla}_{\overline{X}}\hat{\nabla}_{G_1 }G_1 -\hat{\nabla}_{G_1 }\hat{\nabla}_{\overline{X}}G_1 -\hat{\nabla}_{[\overline{X},G_1 ]}G_1 \\
&=-\hat{\nabla}_{\overline{X}}(\overline{\vec{\nabla}\theta}) + \hat{\nabla}_{G_1 }\overline{F_\theta(X)}+d\theta(X)\hat{\nabla}_{G_1 }G_1 \\
&=-\overline{\nabla_X \nabla\theta}-\Omega_\theta(X,\nabla\theta)G_1 -\overline{F_\theta(F_\theta (X))}+d\theta(F_\theta(X))G_1 \\
&-d\theta(X)\overline{\vec{\nabla }\theta}\\
&=-\overline{\nabla_X \nabla\theta}-d\theta(F_\theta (X)) G_1 -\overline{F_\theta (F_\theta (X))}+d\theta(F_\theta(X))G_1 )\\
&-d\theta(X)\overline{\vec{\nabla }\theta}\\
&=-[\overline{\nabla_X \nabla\theta}+d\theta (X)\overline{\nabla\theta}+\overline{F_\theta (F_\theta (X))}]
\end{align*}
\end{proof}

Note that in an inner vector space $V$ for each basis $\{ e_i\}_{i=1}^{k}$ of $V$, there is a basis $\{ e^i\}_{i=1}^{k}$ of $V$ which is known as its reciprocal basis, with the property $<e_i,e^j>=\delta^j_i$, where $\delta^j_i$ is the Kronecker delta symbol. Given any basis $\{ e_i\}_{i=1}^{k}$ of $V$ and its reciprocal basis, the trace of a linear map $T:V\longrightarrow V$ can be computed from the formula $tr(T)=\sum_{i=1}^{k}<T(e_i),e^i>$. For an alternative bilinear map $T:V\times V\longrightarrow W$, an straightforward computation shows that $\sum_{i=1}^{k}T(e_i,e^i)=0$. Inner product of linear map $T,S:V\longrightarrow W$, and bilinear map $T',S':V\times V\longrightarrow W$ (W has an inner product) is defined as follows.
\begin{align*}
<T,S>&=\sum_{i=1}^{k}<T(e_i),S(e^i)>\\
<T',S'>&=\sum_{i,j=1}^{k}<T'(e_i,e_j),S'(e^i,e^j)>
\end{align*}
To compute Ricci curvature and scalar curvature of $\hat{\nabla}$, we need to consider some local basis vector fields $\{E_i\}_{i=1}^{n}$ on $M$ with reciprocal basis $\{E^i\}_{i=1}^{n}$, in this case $\{\overline{E_1},\cdots,\overline{E_n},G_1 \}$ is a local basis for $\widehat{TM}$ and its reciprocal basis is $\{ \overline{E^1},\cdots ,\overline{E^n},G_1 \}$.
\\
In foregoing computations we use the following notation.
\begin{equation*}
Ric_\Omega^\theta (X,Y)=<F_\theta (X),F_\theta (Y))>=<\Omega_\theta (X,.),\Omega_\theta(Y,.)>.
\end{equation*}
\begin{proposition}
The Ricci curvature tensor of $\hat{\nabla}$, denoted by $\widehat{Ric}$, satisfies the following relations.
\begin{align}
\widehat{Ric}(G_1 ,G_1 ) &= -tr(F_\theta \circ F_\theta)-\triangle(\theta)-\vert \nabla \theta\vert^2 \label{d}\\
\widehat{Ric}(\overline{X},G_1) &=\langle div(F_\theta),X\rangle+2\langle F_\theta (\nabla\theta),X\rangle \\
\widehat{Ric}(\overline{X},\overline{Y} ) &= Ric(X,Y)-2Ric_\Omega^\theta (X,Y)-S_\theta (X,Y)
\end{align}
\end{proposition}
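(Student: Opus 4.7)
The plan is to compute $\widehat{Ric}$ by tracing $\hat R$ against the adapted local frame $\{\overline{E_1},\dots,\overline{E_n},G_1\}$ of $\widehat{TM}$ with reciprocal basis $\{\overline{E^1},\dots,\overline{E^n},G_1\}$ introduced just before the statement. Explicitly,
\begin{equation*}
\widehat{Ric}(U,V)=\sum_{i=1}^{n}\langle \hat{R}(\overline{E_i},U)V,\overline{E^i}\rangle+\langle \hat{R}(G_1,U)V,G_1\rangle,
\end{equation*}
and each of the three identities will follow by substituting the appropriate formula from the preceding proposition and regrouping the resulting traces.

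For the first identity, the $G_1$-pairing vanishes since $\hat R(G_1,G_1)=0$. Substituting the expression for $\hat R(\overline{X},G_1)G_1$ into the remaining sum splits into three traces: $-\sum_i\langle\nabla_{E_i}\nabla\theta,E^i\rangle=-tr(Hes(\theta))=-\triangle\theta$; the piece $-\sum_i d\theta(E_i)\langle\nabla\theta,E^i\rangle=-|\nabla\theta|^2$ via the reciprocal-basis expansion of $\nabla\theta$; and $-\sum_i\langle F_\theta(F_\theta(E_i)),E^i\rangle=-tr(F_\theta\circ F_\theta)$, which together give the claimed formula.

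For $\widehat{Ric}(\overline{X},G_1)$, the $G_1$-pairing vanishes because $\hat R(\overline{X},G_1)G_1$ has no $G_1$-component, leaving only $\sum_i\langle\hat R(\overline{E_i},\overline{X})G_1,\overline{E^i}\rangle$. I would then use three short observations: $tr(\nabla_X F_\theta)=0$ because $F_\theta$ is metric-antisymmetric and $\nabla$ is metric-compatible; the identity $\sum_i\langle(\nabla_{E_i}F_\theta)X,E^i\rangle=-\langle div(F_\theta),X\rangle$ obtained from the same antisymmetry; and the reciprocal-basis expansion $\nabla\theta=\sum_i\langle\nabla\theta,E^i\rangle E_i$, which, combined with $\Omega_\theta(E_i,X)=-\langle E_i,F_\theta(X)\rangle$, yields $\sum_i\Omega_\theta(E_i,X)\langle\nabla\theta,E^i\rangle=\langle F_\theta(\nabla\theta),X\rangle$. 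Assembling the three contributions produces the stated expression.

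The third identity is the longest. In $\sum_i\langle\hat R(\overline{E_i},\overline{X})\overline{Y},\overline{E^i}\rangle$ the two $G_1$-valued summands of the curvature formula are killed by pairing with $\overline{E^i}$, and the surviving four terms contribute: $Ric(X,Y)$ from the $\overline{R(E_i,X)Y}$ piece; $0$ from the piece containing $tr(F_\theta)$; and respectively $-Ric_\Omega^\theta(X,Y)$ and $-2\,Ric_\Omega^\theta(X,Y)$ from the remaining two cross terms, via the basic identity $\sum_i\langle a,E_i\rangle\langle b,E^i\rangle=\langle a,b\rangle$ together with antisymmetry of $F_\theta$. The $G_1$-pairing $\langle\hat R(G_1,\overline{X})\overline{Y},G_1\rangle=-\langle\hat R(\overline{X},G_1)\overline{Y},G_1\rangle$ then isolates the scalar coefficient of $G_1$ in $\hat R(\overline{X},G_1)\overline{Y}$, contributing $-S_\theta(X,Y)+Ric_\Omega^\theta(X,Y)$. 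Summing everything yields $Ric(X,Y)-2\,Ric_\Omega^\theta(X,Y)-S_\theta(X,Y)$. The computations are routine; the only real subtlety is bookkeeping, specifically tracking the antisymmetry $\langle F_\theta U,V\rangle=-\langle U,F_\theta V\rangle$ and the reciprocal-basis identities, and noticing that the $G_1$-pairing in the third identity supplies the corrective $+Ric_\Omega^\theta(X,Y)$ that reduces the coefficient from the $-3$ produced by the $\overline{E^i}$-pairings alone to the asserted $-2$.
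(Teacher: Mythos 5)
Your proof is correct and follows essentially the same route as the paper: both compute $\widehat{Ric}$ by tracing the curvature formulas of the preceding proposition over the adapted frame $\{\overline{E_1},\dots,\overline{E_n},G_1\}$ and its reciprocal basis. The only cosmetic difference is that for the first identity the paper traces $\langle\hat{R}(G_1,\overline{E_i})(\overline{E^i}),G_1\rangle$ while you trace $\langle\hat{R}(\overline{E_i},G_1)(G_1),\overline{E^i}\rangle$ --- equivalent by the pair symmetry of the curvature tensor --- and you carry out all three identities (the paper writes out only the first, labelling the rest routine); your bookkeeping, including the $-3+1=-2$ coefficient of $Ric_\Omega^\theta$ in the third identity, checks out.
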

\begin{proof}
Routine computations show these results. For example we compute (\ref{d}),
\begin{align*}
\widehat{Ric}(G_1 ,G_1 )&=\langle \hat{R}(G_1 ,G_1 )(G_1 ),G_1 \rangle +\sum_{i=1}^n \langle \hat{R}(G_1 ,\overline{E_i})(\overline{E^i}),G_1 \rangle \\
&=\sum_{i=1}^n -S_\theta (E_i,E^i)+\langle F_\theta(E_i),F_\theta (E^i)\rangle  \\
&=-tr(S_\theta)-tr(F_\theta \circ F_\theta)
\end{align*}
\end{proof}
\begin{proposition}
Let $R$ be the scalar curvature of $M$ then the scalar curvature of $\hat{\nabla}$, denoted by $\hat{R}$ satisfies the following relation.
\begin{equation}
\hat{R}=R+tr(F_\theta\circ F_\theta)-2(\triangle(\theta)+\vert \nabla \theta\vert^2).
\end{equation}
\end{proposition}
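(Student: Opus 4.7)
The plan is to compute $\hat{R}$ as the trace of the Ricci tensor $\widehat{Ric}$ obtained in the preceding proposition, using the local basis $\{\overline{E_1},\ldots,\overline{E_n},G_1\}$ of $\widehat{TM}$ with reciprocal basis $\{\overline{E^1},\ldots,\overline{E^n},G_1\}$ introduced just above (note $\langle G_1,G_1\rangle=1$, so $G_1$ is its own reciprocal). By the trace formula recalled in the excerpt,
\[
\hat{R} \;=\; \widehat{Ric}(G_1,G_1)\;+\;\sum_{i=1}^{n}\widehat{Ric}(\overline{E_i},\overline{E^i}).
\]

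First, I would substitute the third identity of the previous proposition into the sum, obtaining
\[
\sum_{i=1}^{n}\widehat{Ric}(\overline{E_i},\overline{E^i}) \;=\; R \;-\; 2\sum_{i=1}^{n}Ric_\Omega^\theta(E_i,E^i)\;-\;tr(S_\theta),
\]
and then invoke the direct computation $tr(S_\theta)=\triangle(\theta)+|\nabla\theta|^2$ carried out just before the curvature proposition.

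Second, I would rewrite the remaining $Ric_\Omega^\theta$ trace using the definition $Ric_\Omega^\theta(X,Y)=\langle F_\theta(X),F_\theta(Y)\rangle$ together with the antisymmetry of $F_\theta$ with respect to the associated semi-Riemannian metric (which is inherited from the antisymmetry of $\Omega_\theta$ built into its definition). Specifically, $\langle F_\theta(E_i),F_\theta(E^i)\rangle = -\langle (F_\theta\circ F_\theta)(E_i),E^i\rangle$, so the sum collapses to $\sum_i Ric_\Omega^\theta(E_i,E^i)=-tr(F_\theta\circ F_\theta)$. This gives the tangential contribution as $R+2\,tr(F_\theta\circ F_\theta)-(\triangle\theta+|\nabla\theta|^2)$.

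Finally, I would add in the $G_1$-piece $\widehat{Ric}(G_1,G_1)=-tr(F_\theta\circ F_\theta)-\triangle(\theta)-|\nabla\theta|^2$ from (\ref{d}). The two $\triangle\theta+|\nabla\theta|^2$ contributions combine to the factor $2$, while the $tr(F_\theta\circ F_\theta)$ terms combine as $-1+2=+1$, yielding the claimed identity. The only delicate step is the sign bookkeeping when moving $F_\theta$ across the inner product via antisymmetry and verifying that, because $G_1$ is already unit, the $(G_1,G_1)$-slot contributes with weight $1$ and no stray factor of $e^{2\theta}$ survives.
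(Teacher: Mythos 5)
Your proposal is correct and follows essentially the same route as the paper: both compute $\hat{R}$ as the trace of $\widehat{Ric}$ over the basis $\{\overline{E_1},\dots,\overline{E_n},G_1\}$, substitute the three Ricci identities, use the antisymmetry of $F_\theta$ to turn $\sum_i Ric_\Omega^\theta(E_i,E^i)$ into $-tr(F_\theta\circ F_\theta)$, and invoke $tr(S_\theta)=\triangle(\theta)+\vert\nabla\theta\vert^2$. The sign bookkeeping you flag as the delicate point is exactly the step the paper also relies on, and your accounting of it is accurate.
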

\begin{proof}
\begin{align*}
\hat{R}&=\widehat{Ric}(G_1 ,G_1 )+\sum_{i=1}^n \widehat{Ric}(E_i , E^i)\\
&=\widehat{Ric}(G_1 ,G_1 )+\sum_{i=1}^n [Ric(E_i,E^i)-2Ric_\Omega^\theta (E_i,E^i)-S_\theta (E_i,E^i) \\
&=\widehat{Ric}(G_1 ,G_1 )+R+2tr(F_\theta \circ F_\theta)-tr(S_\theta)\\
&=R+tr(F_\theta \circ F_\theta)-2(\triangle(\theta)+\vert \nabla \theta\vert^2).
\end{align*}
\end{proof}
\section{Application to general relativity}
In this section, $M$ is an arbitrary oriented connected manifold which can be regarded as a space-time manifold.
\\
Fix a metric $\bar{g}=({\bf g},\theta)$ on $\widehat{TM}$, so for all $X,Y\in \mathcal{X}(M)$ and $f,g\in C^\infty (M)$ we have
\[
\bar{g}(\overline{X}+fG,\overline{Y}+gG )={\bf g}(X,Y)+fge^{2\theta}
\] 
Let $\bar{h}$ is an arbitrary metric on $\widehat{TM}$, we can write
\[
\bar{h}(\overline{X}+G,\overline{Y}+G )=\bar{h}(\overline{X},\overline{Y})+\bar{h}(\overline{X},G)+\bar{h}(G,\overline{Y})+\bar{h}(G,G)
\]
the above equation shows that $\bar{h}$ is completely determineds by its valus on $\bar{h}(\overline{X},\overline{Y})$, $\bar{h}(\overline{X},G)$ and $\bar{h}(G,G)$. The Riez representaion theorem shows there exist an 1- form $\delta$ on $M$ such that $\bar{h}(\overline{X},G)=e^{2\theta}\delta (X)$. So, $\bar{h}$ is determined by a triple $(g,\delta,h)$ where $g$ is a semmi-Riemannian metric on $M$, $\delta$ is an 1-form and, $h$ is a smooth function on $M$. In fact
\begin{align*}
\bar{h}(\overline{X},\overline{Y})&=g(X,Y)\\
\bar{h}(\overline{X},G)&=e^{2\theta}\delta (X)\\
\bar{h}(G,G)&=e^{2h}
\end{align*}
With above construction $\bar{g}$ is equivalent to the triple $(g,0,\theta)$ where, 0 is the zero 1-form on $M$. 
\\
Denote canonical volume form of a meter $g'$ on the oriented manifold $M$ by $dV_{g'}$. Also, denote the scalar curvature tensor of  metric $(g',\delta,\theta)$ on $\widehat{TM}$ by $\hat{R}'$. The Hilbert-Einstein action $\mathcal{L}$ on $\mathcal{M}$ is defined as follows.
\[
\mathcal{L}(g',\delta,\theta)=\int_M \hat{R}'\,dV_{g'}
\]
To be more precise, we must assume $M$ is compact or we must integrate on open subset $U$ of $M$ such that the closure $\overline{U}$ is compact.\\
For a symmetric two tensor $s$, a smooth function $h$ and 1- from $\delta$ set,
\begin{align*}
\tilde{g}(t)&=g+ts\\
\tilde{\delta}(t)&=t\delta\\
\tilde{\theta}(t)&=\theta +th
\end{align*}
For sufficiently small $t$, $\bar{\tilde{g}}=(\tilde{g}(t),\tilde{\delta}(t),\tilde{\theta}(t))$ is a variation of $\bar{g}=({\bf g},0,\theta)$. $\bar{g}$ is a critical meter for Hilber-Einstein action iff for any pair $(s,\delta,h)$:
\begin{equation}\label{lagrangian}
\dfrac{d}{dt}\vert_{t=0} \mathcal{L}\big(\tilde{g}(t),\tilde{\delta}(t),\tilde{\theta}(t)\big)=
\dfrac{d}{dt}\vert_{t=0} \int_M \hat{R}(t)dV_{{\bf g}+ts} =0
\end{equation}
where, $\hat{R}(t)$ is the scalar curvature of $\bar{\tilde{g}}=(\tilde{g}(t),\tilde{\delta}(t),\tilde{\theta}(t))$ and
\begin{equation*}
\hat{R}(t)=\tilde{R}(t)+tr(\tilde{F_\theta}\circ \tilde{F_\theta})-2(\triangle(\tilde{\theta}(t))+\vert \nabla \tilde{\theta}(t)\vert^2)
\end{equation*}
in above, $\tilde{R}(t)$ is the scalar curvature of $\tilde{g}(t)$. To find derivation in (\ref{lagrangian}), we must compute derivations of  $\tilde{R}(t)$ , $tr(\tilde{F_\theta}\circ \tilde{F_\theta})$  , $\triangle(\tilde{\theta}(t))$, $\vert \nabla \tilde{\theta}(t)\vert^2$  and $dV_{{\bf g}+ts}$ for $t=0$. In \cite{mass}, it is shown:
\begin{align}
\tilde{R}'(0)=&-<s,Ric>+div(X)\\
(dV_{{\bf g}+ts})'(0)=&\dfrac{1}{2}<{\bf g},s>dV_g\\
\triangle((\tilde{\theta})(t))'(0)=&\triangle(h)-div(s({\nabla} \theta))+\dfrac{1}{2}<{\nabla} tr(s),\nabla \theta>\\
(\vert \nabla \tilde{\theta}(t)\vert^2)'(0)=&-<s,d\theta\otimes d\theta>+2<\nabla h,\nabla \theta>
\end{align}
It is suficient to compute derivation of $tr(\tilde{F_\theta}\circ \tilde{F_\theta})$ at $t=0$. Tensor Field $F_\theta$ is obtained from $\Omega_\theta$ and, if $F_\theta =e^\theta F$ then,
\begin{equation*}
tr(FoF)=-\langle \Omega , \Omega \rangle , 
\end{equation*}
and 2-form $\Omega$ related to orthogonal decomposition of $\widehat{TM}$ with respect to $\bar{g}$. In fact, this decomposition defines a map $\lambda:TM\longrightarrow \widehat{TM}$, such that $\lambda (X) =\overline{X}$. And 
\begin{equation*}
2\Omega(X,Y) = [\overline{X},\overline{Y}]-\overline{[X,Y]}=[\lambda(X),\lambda(Y)]-\lambda([X,Y])
\end{equation*}
Since for all $X\in \mathcal{X}(M)$, we have 
\begin{equation*}
\bar{\tilde{g}}(\lambda(X)+\Gamma_t(X)G,G)=0,
\end{equation*}
where,
\begin{equation*}
\Gamma_t (X)=-te^{-2th}\delta(X)
\end{equation*}
So, $X\mapsto \lambda(X)+\Gamma_t (X)G$ is the induced map by the metric $(\tilde{g},\tilde{\delta},\tilde{\theta})$. Hence, we can write
\begin{align*}
2\tilde{\Omega}(t)(X,Y)G&=[\lambda(X)+\Gamma_t (X)G,\lambda(Y)+\Gamma_t (Y)G]-(\lambda[X,Y]+\Gamma_t ([X,Y])G)\\
&=[\lambda(X),\lambda(Y)]-\lambda([X,Y])+X(\Gamma_t (Y)G)-Y(\Gamma_t (X)G)-\Gamma_t ([X,Y])G\\
&=[\lambda(X),\lambda(Y)]-\lambda([X,Y])+d\Gamma_t(X,Y)G
\end{align*}
This yields,
\[
2\tilde{\Omega}'(0)= -d\delta
\]
By means of a local coordinate system on $M$ with local frame $\partial_i$, set $\Omega(\partial_i,\partial_j)=\Omega_{ij}$. we have
\begin{align*}
<\Omega,\Omega>&= g^{im}g^{jn}<\Omega_{ij},\Omega_{mn}>
\end{align*}
consequently,
\begin{align*}
\big(\langle \tilde{\Omega}(t),\tilde{\Omega}(t)\rangle \big)'(0)=&\big({\tilde{g}(t)}^{im}{\tilde{g}(t)}^{jn}\langle {\tilde{\Omega}(t)}_{ij},{\tilde{\Omega}(t)}_{mn}\rangle \big)'(0)\\
=&-s^{im}g^{jn}\langle \Omega_{ij},\Omega_{mn}\rangle 
-g^{im}s^{jn}\langle \Omega_{ij},\Omega_{mn}\rangle \\
-&\dfrac{1}{2}g^{im}g^{jn}\langle d\delta_{ij},\Omega_{mn}\rangle 
-\dfrac{1}{2}g^{im}g^{jn}\langle \Omega_{ij},d\delta_{mn}\rangle \\
=&-2s^{im}g^{jn}\langle \Omega_{ij},\Omega_{mn}\rangle 
-g^{im}g^{jn}\langle d\delta_{ij},\Omega_{mn}\rangle 
\end{align*}
so, the above computations show
\begin{equation*}
\big(\langle \tilde{\Omega}(t),\tilde{\Omega}(t)\rangle \big)'(0)=-2\langle Ric_\Omega,s\rangle -\langle d\delta,\Omega\rangle 
\end{equation*}
where $Ric_\Omega =e^{-2\theta}Ric_\Omega^\theta$. hence,
\begin{equation*}
\dfrac{d}{dt}\vert_{t=0} tr(\tilde{F}\circ \tilde{F}) =2\langle Ric_\Omega,s\rangle +\langle d\delta,\Omega\rangle 
\end{equation*}
so,
\begin{equation}
\dfrac{d}{dt}\vert_{t=0} tr(\tilde{F_\theta}\circ \tilde{F_\theta}) =2\langle Ric_\Omega^\theta ,s\rangle +e^{\theta}\langle d\delta,\Omega_\theta\rangle +2h.tr(F_\theta \circ F_\theta)
\end{equation}
For every $T\in A^p(M)$ and $S\in A^{p-1}(M,)$, we have \cite{bleecker}
\begin{equation*}
\int_{M}<d S,T>dV_g=\int_{M}<S,-div(T)>dV_g
\end{equation*}
Also note that the integral of divergence of every vector fields on $M$ is zero. Now, we are ready to compute derivation of the $\mathcal{L}\big(\tilde{g}(t),\tilde{\delta}(t),\tilde{\theta}(t)\big)$ for $t=0$. We can write
\begin{align*}
&\dfrac{d}{dt}\vert_{t=0} \mathcal{L}\big(\tilde{g}(t),\tilde{A}(t),\tilde{\theta}(t)\big)=
\dfrac{d}{dt}\vert_{t=0} \int_M \hat{R}(t)dV_{g+ts} =\int_M \big(\hat{R}(t)dV_{g+ts}\big)'(0)\\
&=\int_M \big[\big(\tilde{R}(t)+tr(\tilde{F_\theta}\circ \tilde{F_\theta})-2(\triangle(\tilde{\theta}(t))+\vert \nabla \tilde{\theta}(t)\vert^2)\big)dV_{g+ts}\big]'(0)\\
&=\int_M \big[\big(\tilde{R}(t)-2(\triangle(\tilde{\theta}(t))+\vert \nabla \tilde{\theta}(t)\vert^2)\big)dV_{g+ts}\big]'(0)\\
&+\int_M \big[\big(tr(\tilde{F_\theta}\circ \tilde{F_\theta})\big)dV_{g+ts}\big]'(0)\\
&=I+I\!I
\end{align*}
Where $I$ is the first integral and, $I\!I$ is the second one. According to \cite{mass}, we have
\[
I=\int_M \big[\langle s,-Ric+(\dfrac{1}{2}R-\vert \vec{\nabla}\theta \vert^2){\bf g}+2d\theta\otimes d\theta\rangle+4\triangle(\theta)h\big]dV_{\bf g}
\]
We compute $I\!I$ as follows,
\begin{align*}
I\!I &=\int_M \big[\big(tr(\tilde{F_\theta}\circ \tilde{F_\theta})\big)dV_{g+ts}\big]'(0)\\
&=\int_M tr(F_\theta \circ F_\theta)\dfrac{1}{2}\langle s,g\rangle dV_g\\
&+\int_M \big[ 2\langle Ric_\Omega^\theta ,s\rangle +e^{\theta}\langle d\delta,\Omega_\theta\rangle +2h.tr(F_\theta \circ F_\theta)\big]dV_g\\
&=\int_M\big[ \langle 2Ric_\Omega^\theta+\dfrac{1}{2}tr(F_\theta \circ F_\theta)g,s\rangle -\langle \delta, div(e^{\theta}\Omega_\theta)\rangle \rangle +2h.tr(F_\theta \circ F_\theta)\big]dV_g\\
\end{align*}
consequently,
\begin{align*}
I+I\!I &=\int_M \big[\langle s,-Ric+2Ric_\Omega^\theta +(\dfrac{1}{2} R +\dfrac{1}{2}tr(F_\theta \circ F_\theta)-\vert \nabla\theta\vert^2)g +2d\theta\otimes d\theta\rangle\\
&-\langle \delta , div(e^\theta\Omega_\theta) \rangle +(2tr(F_\theta \circ F_\theta)+4\triangle(\theta)).h \big]dV_g
\end{align*}
The above expresion vanishes for all pair $(s,\delta,h)$, iff
\begin{align}
Ric-\dfrac{1}{2}Rg&=2Ric_\Omega^\theta+2d\theta\otimes d\theta \nonumber \\
&+(\dfrac{1}{2}tr(F_\theta \circ F_\theta)-\vert \nabla\theta \vert^2)g\label{m1}\\
div(e^\theta\Omega_\theta )&=0\label{m2}\\
\triangle(\theta)&=-\dfrac{1}{2}tr(F_\theta \circ F_\theta) \label{m3}
\end{align}
These are field equations for $(g,0,\theta)$ and determine critical metrics for Hilbert action. The first equation is Einstein field equation. Taking traces of both sides of (\ref{m1}), we find that
\[
R=2\vert \nabla\theta\vert^2
\]
the value of $R$ is related to the existent matter in the space-time points and, the above relation shows $\theta$ is related to matter. According to this equality we can write the field equations as follows.
\begin{align}
Ric &=2Ric_\Omega^\theta+2d\theta\otimes d\theta 
+\dfrac{1}{2}tr(F_\theta \circ F_\theta)g \label{M1}\\
div(e^\theta \Omega_\theta )&=0\label{M2}\\
\triangle(\theta)&=-\dfrac{1}{2}tr(F_\theta \circ F_\theta)\label{M3}
\end{align}
Note that for a tensor field $T$, and an smooth function $f$ on $M$, $div(fT)=fdiv(T)+i_{\nabla f}T$. 
According to the equation (\ref{M2}), for all $X\in\mathcal{X}(M)$ we can write
\begin{align*}
0=div(e^\theta \Omega_\theta)(X)&=div(e^{2\theta}\Omega)(X)\\
&=e^{2\theta}div(\Omega)(X)+2e^{2\theta}\Omega (\nabla\theta , X)
\end{align*}
Since $div(\Omega)(X)=\langle div(F), X\rangle$, so (\ref{M2}) is equivalent to 
\begin{equation}
\langle div(F), X\rangle =2d\theta(F(X))\label{div}
\end{equation}
In seuel we use the following notations.
\begin{align*}
T^\Omega_\theta (X,Y)&=2Ric_\Omega^\theta +\dfrac{1}{2}tr(F_\theta \circ F_\theta)g(X,Y)\\
T^\theta &=2d\theta \otimes d\theta -\vert \nabla\theta\vert^2 g
\end{align*}
In the term of this notation (\ref{m1}) can be written as follows.
\begin{equation}
Ric -\dfrac{1}{2}Rg=T^\Omega_\theta +T^\theta
\end{equation}
We compute divergence of both tensors $Ric_\Omega$ and $tr(F\circ F) g$. To compute $(div(Ric_\Omega))(X)$, fix a point $p$ in $M$. Assume that $(\nabla X)(p)=0$, by parallel translating the value of $X$ at $p$. Also, take a normal basis $\{E_i\}_{i=1}^{n}$ at $p$. The reciprocal basis $\{E^i\}_{i=1}^{n}$ is also normal and, we compute at $p$.
\begin{align*}
div(Ric_\Omega)(X)&=\sum_{i=1}^{n}(\nabla_{E_i} Ric_\Omega)(E^i,X)=\sum_{i=1}^{n}{E_i}( Ric_\Omega(E^i,X))\\
&=\sum_{i=1}^{n} E_i(\langle F(E^i),F(X)\rangle ) \\
&=\sum_{i=1}^{n} \langle (\nabla_{E_i} F)(E^i),F(X)\rangle +\sum_{i=1}^{n}\langle F(E^i),(\nabla_{E_i} F)(X)\rangle \\
&=\langle div(F),F(X)\rangle +\sum_{i=1}^{n}\langle F(E^i),(\nabla_{E_i} F)(X)\rangle \\
&=\langle div(F),F(X)\rangle +\sum_{i,j=1}^{n}\langle F(E^i),E_j\rangle \langle (\nabla_{E_i} F)(X),E^j\rangle \\
&=\langle div(F),F(X)\rangle +\dfrac{1}{2} \sum_{i,j=1}^{n}\langle F(E^i),E_j\rangle \big[ \langle (\nabla_{E_i} F)(X),E^j\rangle -\langle (\nabla_{E^j} F)(X),E_i\rangle  \big]\\
&=\langle div(F),F(X)\rangle +\dfrac{1}{2} \sum_{i,j=1}^{n}\langle F(E^i),E_j\rangle \big[ \langle (\nabla_{E_i} F)(X),E^j\rangle +\langle (\nabla_{E^j} F)(E_i),X\rangle  \big]\\
&=\langle div(F),F(X)\rangle +\dfrac{1}{2} \sum_{i,j=1}^{n}\langle F(E^i),E_j\rangle (-\langle (\nabla_X F)(E^j),E_i\rangle )\\
&=\langle div(F),F(X)\rangle +\dfrac{1}{2} \sum_{i,j=1}^{n}\langle F(E^i),E_j\rangle \langle (\nabla_X F)(E_i),E^j\rangle \\
&=\langle div(F),F(X)\rangle +\dfrac{1}{2} \sum_{i=1}^{n}\langle F(E^i),(\nabla_X F)(E_i)\rangle \\
&=\langle div(F),F(X)\rangle +\dfrac{1}{4}X(\vert F\vert^2)
\end{align*}
In above we used the differential identity for $F$ which is derived from (\ref{bianchi}).
\begin{equation*}
\langle (\nabla_X F)(Y),Z\rangle +\langle (\nabla_Y F)(Z),X\rangle +\langle (\nabla_Z F)(X),Y\rangle =0
\end{equation*}
Remind that for any smooth function $f$ and the metric $\textbf{g}$ on $M$: \\$(div(f\textbf{g}))(X)=X(f)=df(X)$. So,
\begin{align*}
div(tr(F\circ F) \mathit{g})&=X(tr(F\circ F))=X(\sum_{i=1}^{n}\langle (F\circ F)(E^i),E_i\rangle ) \\
&= - X(\sum_{i=1}^{n}\langle  F(E^i),F(E_i)\rangle )=-X(\vert F\vert^2)
\end{align*}
hence, with the help of (\ref{div}) we compute divergence of $T^\Omega=e^{-2\theta}T^\Omega_\theta$,
\begin{align*}
div(T^\Omega)(X)&=2div(Ric_\Omega)(X)+\dfrac{1}{2}div(tr(F\circ F) g)(X)\\
&=2\langle div(F),F(X)\rangle +\dfrac{1}{2}X(\vert F \vert^2)-\dfrac{1}{2}X(\vert F \vert^2)\\
&=4d\theta (F(F(X))
\end{align*}
Also, $div(T^\Omega_\theta)=div(e^{2\theta}T^\Omega)$, So
\begin{align*}
div(T^\Omega_\theta)(X)&=e^{2\theta}div(T^\Omega)(X)+2e^{2\theta}T^\Omega (\nabla\theta , X)\\
&=4e^{2\theta}d\theta(F(F(X))+2e^{2\theta}\big( 2\langle F(\nabla\theta),F(X)\rangle +\dfrac{1}{2}tr(F\circ F)g(\nabla\theta , X)\big)\\
&=4e^{2\theta}d\theta(F(F(X))-4e^{2\theta}d\theta(F(F(X))+tr(F_\theta \circ F_\theta)d\theta (X)\\
&=tr(F_\theta \circ F_\theta)d\theta(X)
\end{align*}
In summary, with the help of (\ref{M2}) which is equivalent to (\ref{div}), we have
\begin{equation}\label{Div}
div(T^\Omega_\theta)(X)=tr(F_\theta \circ F_\theta)d\theta(X)
\end{equation}
To compute divergence of the symmetric tensor $T^\theta$ we can write.
\begin{align*}
div(d\theta\otimes d\theta)(X)&=\sum_{i=1}^n (\nabla_{E_i} d\theta\otimes d\theta)(E^i ,X)\\
&=\sum_{i=1}^n \big((\nabla_{E_i} d\theta)\otimes d\theta+ d\theta\otimes (\nabla_{E_i} d\theta)\big)(E^i ,X)\\
&=(\nabla_{E_i} d\theta)(E^i)d\theta(X)+d\theta(E^i)(\nabla_{E_i} d\theta)(X)\\
&=\triangle(\theta)d\theta(X)+\sum_{i=1}^n d\theta(E^i)Hes(\theta)(E_i , X)=\triangle(\theta)d\theta (X)+Hes(\vec{\nabla}\theta ,X)
\end{align*}
Also, we have
\begin{align*}
div(\vert \vec{\nabla}\theta\vert^2 g)(X)&=d(\vert \vec{\nabla}\theta\vert^2)(X)=X\langle \vec{\nabla}\theta ,\vec{\nabla}\theta \rangle =2\langle  \nabla_X (\vec{\nabla}\theta),\vec{\nabla}\theta \rangle \\
&=2(\nabla_X d\theta)(\vec{\nabla}\theta)=2Hes(\theta)(\vec{\nabla}\theta ,X).
\end{align*}
hence
\[
div(T^\theta)=2\triangle (\theta)d\theta (X)
\]
Since, divergence of Einstein tensor is zero so, the divergence of the right hand side of (\ref{m1}) must be zero. With the help of (\ref{M2}) and (\ref{M3}), we have the following proposition that remindes the conversation low for momentum-energy tensor.
\begin{proposition}
The divergence of symmetric tensor $T^\Omega_\theta +T^\theta$ is zero.
\end{proposition}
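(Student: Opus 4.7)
The plan is to observe that all the substantive computations have already been done in the material immediately preceding the statement; what remains is just to assemble them and apply the third field equation (\ref{M3}).

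First I would recall the two divergence formulas derived above. On the one hand, the computation using the differential identity for $F$ (derived from the closedness of $\Omega$) together with the equivalent form (\ref{div}) of the field equation (\ref{M2}) gave
\begin{equation*}
\operatorname{div}(T^\Omega_\theta)(X) = \operatorname{tr}(F_\theta \circ F_\theta)\, d\theta(X).
\end{equation*}
On the other hand, the Hessian manipulation gave
\begin{equation*}
\operatorname{div}(T^\theta)(X) = 2\triangle(\theta)\, d\theta(X).
\end{equation*}
These are the only ingredients needed.

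Next I would add the two expressions and invoke equation (\ref{M3}), which states $\triangle(\theta) = -\tfrac{1}{2}\operatorname{tr}(F_\theta \circ F_\theta)$. Substituting gives
\begin{align*}
\operatorname{div}(T^\Omega_\theta + T^\theta)(X) &= \operatorname{tr}(F_\theta \circ F_\theta)\, d\theta(X) + 2\triangle(\theta)\, d\theta(X)\\
&= \operatorname{tr}(F_\theta \circ F_\theta)\, d\theta(X) - \operatorname{tr}(F_\theta \circ F_\theta)\, d\theta(X) = 0,
\end{align*}
valid for every $X\in\mathcal{X}(M)$, which is the desired conclusion.

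There is essentially no obstacle here, since both the Bianchi-type identity for $F$ (used in the computation of $\operatorname{div}(T^\Omega_\theta)$) and the reformulation of (\ref{M2}) as (\ref{div}) are already in hand. The only conceptual point worth highlighting in the write-up is that the proposition is not a tautological consequence of the twice-contracted Bianchi identity for $\widehat{\nabla}$, but rather a consistency check: the vanishing of $\operatorname{div}(\operatorname{Ric}-\tfrac{1}{2}Rg)$ forces a relation between the two scalar quantities $\operatorname{tr}(F_\theta\circ F_\theta)$ and $\triangle(\theta)$, and (\ref{M3}) is exactly this relation. Thus the proposition simultaneously confirms the internal consistency of the field equations (\ref{M1})–(\ref{M3}) and plays the role of a conservation law for the effective energy-momentum tensor $T^\Omega_\theta + T^\theta$.
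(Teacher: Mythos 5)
Your proof is correct and takes essentially the same route as the paper's own argument: both simply add the previously derived formulas $div(T^\Omega_\theta)(X)=tr(F_\theta\circ F_\theta)\,d\theta(X)$ and $div(T^\theta)(X)=2\triangle(\theta)\,d\theta(X)$ and cancel via the field equation (\ref{M3}). Your closing remark on the result being a consistency check with the contracted Bianchi identity is a nice addition but not needed for the proof itself.
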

\begin{proof}
It is sufficient to collect our computations in above.
\begin{align*}
div(T^\Omega_\theta +T^\Omega)(X)&=div(T^\Omega_\theta)(X) +div(T^\theta )(X)\\
&=tr(F_\theta \circ F_\theta)d\theta(X)+2\triangle (\theta)d\theta (X)\\
&=tr(F_\theta \circ F_\theta)d\theta(X)-tr(F_\theta \circ F_\theta)d\theta(X)=0
\end{align*}
\end{proof}
\section{Conclusion}
This structure is completely geometrical and forces and matter are parts of the geometry.\\
Of course, it does not contain quantum effects. This theory must be improved such that be capable of describing particles and their internal structures.

\end{document}